\newtheorem{thm}{Theorem}
\newtheorem{lem}[thm]{Lemma}
\newtheorem*{rem}{Remark}
\newtheorem*{ack}{Acknowledgements}
\newcommand{\SL}{{\rm SL}}
\newcommand{\GL}{{\rm GL}}
\newcommand{\C}{\mathbb{C}}
\newcommand{\Q}{\mathbb{Q}}
\newcommand{\Z}{\mathbb{Z}}
\newcommand{\R}{\mathbb{R}}
\begin{document}

\title{Explicit bounds for sums of squares}
\author{Jeremy Rouse}
\address{Department of Mathematics, Wake Forest University,
  Winston-Salem, NC 27109}
\email{rouseja@wfu.edu}
\thanks{The author was supported by NSF grant DMS-0901090}
\subjclass[2010]{Primary 11E25; Secondary 11F30}
\begin{abstract}
For an even integer $k$, let $r_{2k}(n)$ be the number of representations
of $n$ as a sum of $2k$ squares. The quantity $r_{2k}(n)$ is appoximated
by the classical singular series $\rho_{2k}(n) \asymp n^{k-1}$. Deligne's
bound on the Fourier coefficients of Hecke eigenforms gives that
$r_{2k}(n) = \rho_{2k}(n) + O(d(n) n^{\frac{k-1}{2}})$. We determine
the optimal implied constant in this estimate provided that
either $k/2$ or $n$ is odd. The proof requires a delicate positivity
argument involving Petersson inner products.
\end{abstract}

\maketitle

\section{Introduction and Statement of Results}

In Hardy's book on Ramanujan \cite{Hardy}, he states the following
(Chapter 9, pg. 132).
\begin{quote}
The problem of the representations of an integer $n$ as the sum of a
given number $k$ of integral squares is one of the most celebrated in the
theory of numbers. Its history may be traced back to Diophantus, but
begins effectively with Girard's (or Fermat's) theorem that a prime $4m+1$
is the sum of two squares. Almost every arithmetician of note since Fermat
has contributed to the solution of the problem, and it has its puzzles
for us still.
\end{quote}

If $n$ is a non-negative integer, let
\[
  r_{s}(n) = \# \{ (x_{1}, x_{2}, \ldots, x_{s}) \in \Z^{s}
  : x_{1}^{2} + x_{2}^{2} + \cdots + x_{s}^{2} = n \}
\]
be the number of representations of $n$ as a sum of $s$ squares. 

The classical work that Hardy refers to includes the formulas of Jacobi
giving the following exact formulas. Let $n$ be a positive integer
and write $n = 2^{\alpha} m$, where $m$ is odd. Then
\[
  r_{4}(n) = \begin{cases}
    8 \sigma_{1}(m) & \text{ if } \alpha = 0\\
    24 \sigma_{1}(m) & \text{ if } \alpha \geq 1,
\end{cases}
\quad
  r_{8}(n) = \begin{cases}
   16 \sigma_{3}(m) & \text{ if } \alpha = 0\\
   16 \cdot \frac{2^{3 \alpha + 3} - 15}{7} \sigma_{3}(m) & \text{ if } \alpha
  \geq 1.
\end{cases}
\]
The search for higher exact formulas (each
involving more complicated arithmetic functions) for was carried out
by many mathematicians. Glaisher \cite{Glaisher} and Rankin \cite{Rankin} were 
interested in these formulas where the arithmetic functions involved were 
multiplicative.

In a different direction, Hardy \cite{Hardy2} and Mordell \cite{Mordell} 
applied the circle method to give an approximation
\[
  r_{s}(n) = \rho_{s}(n) + R_{s}(n)
\]
where $\rho_{s}(n)$ is the ``singular series'' and $R_{s}(n)$ is an
error term. Here $\rho_{s}(n)$ can be expressed as a divisor
sum if $s$ is even, and $\rho_{s}(n) \asymp n^{\frac{s}{2} - 1}$
provided $s > 4$. The contribution $R_{s}(n)$ is more mysterious,
and Deligne's proof of the Weil conjectures (see \cite{Del}) implies an 
estimate of the form
\begin{equation}
\label{errorbound}
  R_{s}(n) = O(d(n) n^{\frac{s}{4} - \frac{1}{2}})
\end{equation}
provided $s$ is even. The phenomena of exact formulas for $r_{s}(n)$
of the form $r_{s}(n) = \rho_{s}(n)$ only occurs for small $s$. In 
\cite{Rankin2}, Rankin shows that $R_{s}(n)$ is identically zero if and only 
if $s \leq 8$. Exact formulas of a different nature were given by
Milne in \cite{Milne} when $s = 4n^{2}$ and $s = 4n(n+1)$. 

The problem we study is the implied constant in equation
\eqref{errorbound} above. This is a natural question, and in
\cite{JR1,JR2,JR3}, the author has studied the corresponding problem for 
powers of the $\Delta$ function, $p$-core partitions (joint work with Byungchan
Kim), and arbitrary level 1 cusp forms (joint work with Paul Jenkins), 
respectively. To prove their now famous ``290-theorem'' Bhargava and Hanke 
\cite{BH} compute this implied constant for about 6000 quadratic forms in four 
variables and use this to determine precisely which integers these forms
represent.

Returning to our problem, if $s = 2k$ and $k$ is even, we have that
\[
  \rho_{2k}(n) = \frac{2k (-1)^{k/2+1}}{(2^{k} - 1) B_{k}}
  \left(\sigma_{k-1}(n) + (-1 + (-1)^{k/2+1}) \sigma_{k-1}(n/2)
  + (-1)^{k/2} 2^{k} \sigma_{k-1}(n/4)\right),
\]
where $B_{k}$ is the $k$th Bernoulli number and $\sigma_{k-1}(n)$
is the sum of the $k-1$st powers of the positive integer divisors of $n$ (and 
is hence zero if $n$ is not an integer). Our main result is the following.

\begin{thm}
\label{main}
Suppose that $k$ is even. If either $k/2$ is odd or $n$ is odd, then we have
\[
  \left|r_{2k}(n) - \rho_{2k}(n)\right| \leq 
\left(4k + \frac{2k (-1)^{k/2}}{(2^{k} - 1) B_{k}}\right) 
d(n) n^{\frac{k-1}{2}}.
\]
\end{thm}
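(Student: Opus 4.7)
The strategy is the standard modular forms approach, combined with a careful analysis of the newform/oldform decomposition at level $4$. Recall that $\theta(z)^{2k} = \sum_{n\ge 0} r_{2k}(n) q^n$ is a weight $k$ modular form on $\Gamma_0(4)$. Decompose it as $\theta^{2k} = E + f$, where $E$ is the unique Eisenstein series with $n$-th Fourier coefficient $\rho_{2k}(n)$ and $f \in S_k(\Gamma_0(4))$ is a cusp form, so that $a_f(n) = r_{2k}(n) - \rho_{2k}(n)$ is exactly what we want to bound. A basis for $S_k(\Gamma_0(4))$ is given by $\{g(dz)\}$, where $g$ ranges over normalized newforms of level $N_g \in \{1,2,4\}$ and $d$ ranges over divisors of $4/N_g$. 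Expand $f = \sum_{g,d} c_{g,d}\, g(dz)$.

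By Deligne's theorem each newform satisfies $|a_g(n)| \le d(n) n^{(k-1)/2}$. The key observation for the ``$n$ odd'' case is that only the terms with $d=1$ contribute to $a_f(n)$, since the $n$-th Fourier coefficient of $g(dz)$ is $a_g(n/d)$ when $d\mid n$ and $0$ otherwise. Hence
\[
  |a_f(n)| \le \Bigl(\sum_g |c_{g,1}|\Bigr) d(n) n^{(k-1)/2}.
\]
Since the shifts with $d>1$ have vanishing $q^1$-coefficient and each newform is normalized so $a_g(1) = 1$, we have $a_f(1) = \sum_g c_{g,1}$. A direct evaluation gives $a_f(1) = r_{2k}(1) - \rho_{2k}(1) = 4k + \frac{2k(-1)^{k/2}}{(2^{k}-1)B_k}$, which is precisely the constant appearing in the theorem. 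The bound therefore follows provided $c_{g,1} \ge 0$ for every newform $g$ appearing in $f$. For the ``$k/2$ odd'' branch, one runs the analogous argument, bounding $|a_{g(dz)}(n)| \le d(n)n^{(k-1)/2}$ uniformly in $d$, and showing that \emph{all} of the coefficients $c_{g,d}$ are non-negative.

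The principal obstacle is this positivity. Each $c_{g,d}$ can be extracted as a linear combination of the Petersson inner products $\langle \theta^{2k}, g(ez)\rangle$ with $e \mid 4/N_g$, with coefficients determined by the inverse of the Gram matrix of the shifts $\{g(ez)\}$. The inner products themselves can be analyzed by Rankin--Selberg unfolding, which expresses each $\langle \theta^{2k}, g(ez)\rangle$ in terms of the Dirichlet series $\sum_n r_{2k}(n) a_g(n) n^{-s}$ and hence relates them to $L$-values with non-negative factors. The difficulty is that the Gram matrix inversion introduces signs that can cancel against the positivity of the inner products. The parity hypothesis ``$k/2$ odd or $n$ odd'' is what ensures that these signs line up correctly: when $n$ is odd we only need positivity of the $c_{g,1}$ (a strictly easier task, since only the leading row of the inverse Gram matrix appears), and when $k/2$ is odd a global symmetry in the Eisenstein series (the vanishing of the $\sigma_{k-1}(n/2)$ term in $\rho_{2k}$) restricts which shifts can mix. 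Without one of these hypotheses, the cancellation argument breaks down, and indeed the bound is not expected to hold in that excluded regime.
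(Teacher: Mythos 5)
Your decomposition and your treatment of the odd-$n$ case match the paper: only the $d=1$ shifts contribute to an odd Fourier coefficient, $\sum_g c_{g,1}$ is read off from the $q^1$-coefficient, and everything reduces to $c_{g,1}\ge 0$. But there are two genuine gaps. First, the positivity $c_{g,1}\ge 0$ is the entire content of the paper (its Theorem~\ref{positivity}); you defer it with a one-sentence gesture at Rankin--Selberg unfolding and ``$L$-values with non-negative factors.'' That is not a proof and not how the positivity is actually established: the paper builds, for each newform $g$, a $W_4$-eigenform $\tilde g$ whose inner product with $\theta^{2k}$ detects the sign of $c_{g,1}$, unfolds $\langle\theta^{2k},\tilde g\rangle$ over the three cusps of $\Gamma_0(4)$, and shows the $n=1$ term $\frac{8k}{(4\pi)^k}\int_{4\pi}^\infty u^{k-2}e^{-u}\,du$ dominates all other contributions --- which requires explicit polynomial formulas for $r_{2k}(n)$ for $n\le 2500$, induction bounds on $r_s(n)$, careful incomplete-Gamma estimates, and direct machine computation for $k\le 194$. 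Positivity of these inner products is not automatic from any general $L$-value nonnegativity.

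Second, your even-$n$, $k/2$ odd branch is wrong as stated. It is \emph{not} true that all $c_{g,d}\ge 0$: when $k\equiv 2\pmod 4$ the projection of $\theta^{2k}$ onto the eigenspace of a level-$1$ newform $f_i$ is $r_i\bigl(f_i-2^k f_i|V(4)\bigr)$ (forced by $\theta^{2k}|W_4=-\theta^{2k}$), so the coefficient of $f_i|V(4)$ is $-2^k r_i<0$. Moreover, bounding each shift by $d(n)n^{(k-1)/2}$ uniformly and summing $\sum_{g,d}|c_{g,d}|$ gives a constant inflated by factors of $2^k$ and $2^{k/2}$, far exceeding the one in the theorem. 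The correct argument bounds the $n$th coefficient of each whole Atkin--Lehner packet: for $f_i-2^kf_i|V(4)$ the coefficient is $a_i(n)-2^ka_i(n/4)$, and since $(n/4)^{(k-1)/2}=2^{-(k-1)}n^{(k-1)/2}$ this is at most $3\,d(n)n^{(k-1)/2}$; similarly for the level-$2$ packets, while level-$4$ newforms have $a(2^j)=0$ and so vanish at even $n$. One then computes $\sum_i(3r_i+3s_i)$ by taking the trace of the cuspidal part down to $S_k(\Gamma_0(2))$, obtaining $4k-\frac{6k}{(2^k-1)B_k}$, which is strictly below the theorem's constant. Your appeal to the vanishing of the $\sigma_{k-1}(n/2)$ term in $\rho_{2k}$ concerns only the Eisenstein part and does not supply this mechanism.
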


\begin{rem}
If $2k = 4$ or $2k = 8$, the right hand side is zero, and we recover
the exact formulas of Jacobi. For arbitrary even $k$, we have
$r_{2k}(1) = 4k$ and $\rho_{2k}(1) = \frac{2k (-1)^{k/2+1}}{(2^{k} - 1) B_{k}}$.
Thus, the inequality above becomes an equality when $n = 1$. This
shows that the implied constant
\[
  4k + \frac{2k (-1)^{k/2}}{(2^{k} - 1) B_{k}}
\]
in \eqref{errorbound} is best possible. The error term is smaller than the
main term provided $n \gg k^{2}$.
\end{rem}

Our approach to proving Theorem~\ref{main} is as follows. If
\[
  \theta(z) = 1 + 2 \sum_{n=1}^{\infty} q^{n^{2}}, \quad q = e^{2 \pi i z}
\]
is the classical Jacobi theta function, then 
\[
  \theta^{2k}(z) = \sum_{n=0}^{\infty} r_{2k}(n) q^{n}
\]
is a modular form of weight $k$ on $\Gamma_{0}(4)$. If $k$ is even, we can 
decompose
\begin{equation}
\label{decomp}
  \theta^{2k}(z) = a_{1} E_{k}(z) + a_{2} E_{k}(2z) + a_{3} E_{k}(4z)
  + \sum_{i} c_{i} g_{i}(z) + \sum_{i} d_{i} g_{i}(2z) + \sum_{i} e_{i}
  g_{i}(4z)
\end{equation}
where
\[
  E_{k}(z) = 1 - \frac{2k}{B_{k}} \sum_{n=1}^{\infty} \sigma_{k-1}(n) q^{n}
\]
is the classical level 1 Eisenstein series, and the $g_{i}(z)$
are normalized newforms of level $1$, $2$, or $4$. We prove the following.

\begin{thm}
\label{positivity}
Assume the notation above. Then for all $i$, $c_{i} \geq 0$.
\end{thm}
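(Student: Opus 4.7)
The plan is to recover $c_i$ by projecting $\theta^{2k}$ onto the three-dimensional $g_i$-isotypic component of $S_k(\Gamma_0(4))$, spanned by $g_i(z)$, $g_i(2z)$, $g_i(4z)$, and then to exhibit the resulting first coordinate as manifestly nonnegative. Let $G$ denote the $3\times 3$ Gram matrix with entries $G_{de}=\langle g_i(dz),g_i(ez)\rangle_{\Gamma_0(4)}$ and $\vec P$ the column vector with $P_d=\langle\theta^{2k},g_i(dz)\rangle_{\Gamma_0(4)}$. Since distinct newform isotypic components are mutually orthogonal under the Petersson inner product, $(c_i,d_i,e_i)^T = G^{-1}\vec P$, so the task reduces to showing that the first entry of $G^{-1}\vec P$ is $\geq 0$.

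The matrix $G$ can be written explicitly in terms of $\langle g_i,g_i\rangle_{\SL_2(\Z)}$ and the Hecke eigenvalue $a_2=a_2(g_i)$. Since $T_2=U_2+2^{k-1}V_2$ on level-$1$ forms, the identity $g_i|U_2=a_2 g_i-2^{k-1}g_i(2z)$ combined with the adjointness of $V_2$ and $U_2$ (up to an index factor $[\SL_2(\Z):\Gamma_0(4)]$ and a power of $2^{k-1}$) under the level-$4$ Petersson inner product reduces each $G_{de}$ to an explicit rational expression in $a_2$ times $\langle g_i,g_i\rangle_{\SL_2(\Z)}$. The inner products $P_d$ will be obtained by Rankin--Selberg unfolding: pair $\theta^{2k}\overline{g_i(dz)}$ against the real-analytic Eisenstein series for $\Gamma_0(4)$, unfold via the Fourier expansion of $\theta^{2k}$ to the strip $\Gamma_\infty\backslash\mathcal{H}$, and extract the residue producing the constant function. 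This expresses each $P_d$ as a specific value of a Dirichlet series built from $r_{2k}(n)$ and the Fourier coefficients of $g_i$, which in turn factors through the Euler product for $g_i$ and the singular series for $\theta^{2k}$.

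The main obstacle is the final positivity of $(G^{-1}\vec P)_1$, which is the delicate positivity argument advertised in the abstract. The most natural route would be to identify $c_i$ with a positive constant times $|a_h(1)|^2$ for some half-integral weight form $h$ in the Kohnen preimage of $g_i$, by relating the inner products $P_d$ to the Fourier coefficients of $\theta\cdot\theta^{2k}=\theta^{2k+1}$ and invoking Waldspurger's formula; positivity would then be automatic. Failing a clean squared-coefficient identification, one would expand $(G^{-1}\vec P)_1$ as an explicit function of $a_2$ together with the relevant special values of $L(s,g_i)$, and then use Deligne's bound $|a_2|\leq 2\cdot 2^{(k-1)/2}$ along with sign data from functional equations to verify nonnegativity by hand. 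This matching of the Gram inversion against the three Rankin--Selberg expressions for the $P_d$, and the resulting delicate cancellation, is where I expect the real work of the proof to lie.
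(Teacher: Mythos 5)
Your reduction to linear algebra is sound: for a level-$1$ newform $g_i$ the isotypic component of $S_k(\Gamma_0(4))$ is spanned by $g_i(z), g_i(2z), g_i(4z)$, the Gram matrix is computable from $\langle f, f|V(p)\rangle = \frac{a(p)}{p^{k-1}(p+1)}\langle f,f\rangle$ (this is the paper's Lemma~\ref{petold}), and $c_i$ is the first entry of $G^{-1}\vec P$. The paper reaches the same reduction more cleanly by exploiting that $\theta^{2k}$ is a $W_4$-eigenform with eigenvalue $(-1)^{k/2}$: this forces the projection onto the isotypic component into a single Atkin--Lehner eigenspace, so one only needs the sign of a single inner product $\langle\theta^{2k},\tilde g_i\rangle$ for an explicit test vector $\tilde g_i$. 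That is a difference of bookkeeping, not of substance.

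The genuine gap is everything after that point, and you have in effect flagged it yourself. First, the Rankin--Selberg unfolding of $P_d$ is circular here: it expresses $\langle\theta^{2k},g_i(dz)\rangle$ as a residue of $\sum_n r_{2k}(n)\overline{a(dn)}n^{-s}$, and since $\theta^{2k}$ is not a Hecke eigenform this Dirichlet series does not factor through an Euler product; evaluating it requires knowing the decomposition of $r_{2k}(n)$ into eigenform components, i.e.\ the $c_i$ themselves. Second, the Waldspurger route does not apply: the $c_i$ are not central $L$-values of quadratic twists and there is no period formula identifying them with squares, so positivity is not automatic, and ``sign data from functional equations'' plus Deligne's bound on $a(2)$ is nowhere near enough to control the sign of $(G^{-1}\vec P)_1$. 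What the paper actually does is bound the Petersson integral $\langle\theta^{2k},\tilde g_i\rangle$ \emph{directly} over a fundamental domain for $\Gamma_0(4)$, cusp by cusp: the contribution of the cusp at infinity unfolds to $\frac{2}{(4\pi)^k}\sum_{n\ge 1}\frac{r_{2k}(n)\overline{a(n)}}{n^{k-1}}\int_{4\pi n}^\infty u^{k-2}e^{-u}\,du$, the $n=1$ term is shown to dominate using explicit polynomial formulas and induction bounds for $r_{2k}(n)$ (Lemma~\ref{simplebound}) together with the exponential decay of the incomplete Gamma function, the cusp at $0$ essentially duplicates the cusp at infinity via $W_4$, and the cusp at $1/2$ is negligible because $\theta$ vanishes there. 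Even then the asymptotic bounds only take over for large $k$, and the cases $k\le 194$ require explicit machine computation of the decomposition. None of this delicate quantitative work appears in your proposal, so as it stands the positivity---the entire content of the theorem---is not established.
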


Theorem~\ref{positivity} allows us to read off
\[
  \sum_{i} |c_{i}| = \sum_{i} c_{i} = 4k
  + \frac{2k (-1)^{k/2}}{(2^{k} - 1) B_{k}}
\]
from the coefficient of $q$ on both sides of \eqref{decomp}, using
that $a_{1} = \frac{(-1)^{k/2}}{(2^{k} - 1) B_{k}}$.

To prove Theorem~\ref{positivity} we use properties of the Petersson
inner product on $M_{k}(\Gamma_{0}(4))$ (see Section~\ref{prelim} for
precise definitions). If $g_{i}(z)$ is a newform of level $4$, then
$g_{i}(z)$ is orthogonal to every other term in the expansion \eqref{decomp}. 
It follows that
\[
  \langle \theta^{2k}, g_{i} \rangle = c_{i} \langle g_{i}, g_{i} \rangle.
\]
It suffices to prove that $\langle \theta^{2k}, g_{i} \rangle \geq 0$.
This Petersson inner product consists of a contribution from each of the
three cusps of $\Gamma_{0}(4)$: $\infty$, $0$, and $1/2$. The contribution
from $\infty$ is
\[
  \frac{2}{(4 \pi)^{k}} \sum_{n=1}^{\infty} \frac{r_{2k}(n) a(n)}{n^{k-1}}
  \int_{4 \pi n}^{\infty} u^{k-2} e^{-u} \, du.
\]
Here $g_{i}(z) = \sum_{n=1}^{\infty} a(n) q^{n}$. Our approach is to show that
the main term in the above sum comes from $n = 1$. If $n$ is fixed,
$r_{2k}(n)$ is a polynomial of degree $2k$ in $n$. We compute these 
polynomials explicitly, and use this to the bound the terms
when $2 \leq n \leq 2500$. Next, we use a simple induction bound on $r_{2k}(n)$
to show that the terms with $2500 \leq n \leq \frac{k}{2 \pi} \log(k)$
are small enough. Finally, we use the exponential decay of 
$\int_{4 \pi n}^{\infty} u^{k-2} e^{-u} \, du$ when 
$n \geq \frac{k}{2 \pi} \log(k)$.

The cusp at zero behaves in an essentially identical way to the cusp at 
$\infty$, and the contribution from the cusp at $1/2$ is very small,
since $\theta(z)$ vanishes there.

\begin{rem}
  This result can be thought of as a refined form of the circle
  method.  The Eisenstein series is the contribution of the major
  arcs, while the Deligne's result, and the bounds we give on the
  constants $c_{i}$ can be thought of as explicit, uniform minor arc
  estimates. Further, it is plausible that the Fourier coefficients of
  distinct newforms are independent (an assertion that could be
  justified under the assumption of the holomorphy of certain
  Rankin-Selberg convolutions). Combining this with the recent proof
  of the Sato-Tate conjecture (see \cite{BGHT}) suggests that for any
  $\epsilon > 0$, there are infinitely many primes $p$ so that
\[
  |r_{2k}(p) - \rho_{2k}(p)| > \left(4k + \frac{2k (-1)^{k/2}}{(2^{k} - 1) B_{k}} - \epsilon\right) d(p) p^{\frac{k-1}{2}}.
\]
\end{rem}

\begin{rem}
The proof gives more detailed information about the constants $c_{i}$
in \eqref{decomp}. In particular, if $g_{i}(z)$ is a newform of level $4$ and 
$k \equiv 2 \pmod{4}$, then
\[
  c_{i} = 16k \cdot \frac{(k-2)!}{(4 \pi)^{k} \langle g_{i}, g_{i} \rangle}
  (1 + O(\alpha^{k}))
\]
where $\alpha \approx 0.918$. If $k \equiv 0 \pmod{4}$, then $c_{i} = 0$.
Similar, but more complicated results are true for the constants $c_{i}$
associated with level 1 and level 2 newforms.
\end{rem}

An outline of the paper is as follows. In Section~\ref{prelim} we
give precise definitions and review necessary background information. In
Section~\ref{lemmas} we prove a number of auxiliary results that will
be used in the proof of Theorem~\ref{positivity}. In Section~\ref{mainsec},
we prove Theorem~\ref{positivity} and use this to deduce
Theorem~\ref{main}. Finally, in Section~\ref{finalsec}, we
address other values of $k$ and $n$.

\begin{ack}
The author used Magma \cite{Magma} version 2.17 for computations. 
\end{ack}

\section{Background}
\label{prelim}

In this section we give definitions and review necessary background.
For $N \geq 1$, let $M_{k}(\Gamma_{0}(N))$ denote the $\C$-vector
space of modular forms of weight $k$ on $\Gamma_{0}(N) :=
\left\{ \left( \begin{matrix} a & b \\ c & d \end{matrix} \right)
\in \SL_{2}(\Z) : N | c \right\}$. Let $S_{k}(\Gamma_{0}(N))$ denote
the subspace of cusp forms. 

If $f$ is a modular form of weight $k$, and $\alpha = \left[ \begin{matrix}
a & b \\ c & d \end{matrix} \right] \in \GL_{2}(\Q)$ and has positive
determinant, define the usual slash operator by
\[
  f | \alpha = (ad-bc)^{k/2} (cz+d)^{-k} f\left(\frac{az+b}{cz+d}\right).
\]
For a positive integer $d$, define the operator $V(d)$ by
$f(z) | V(d) = f(dz)$. It is well-known (see \cite{Iwa}, pg. 107 for a proof)
that $V(d)$ maps $M_{k}(\Gamma_{0}(N))$ to $M_{k}(\Gamma_{0}(dN))$
and $S_{k}(\Gamma_{0}(N))$ to $S_{k}(\Gamma_{0}(dN))$. For a positive integer 
$d$, define the operator $U(d)$ by
\[
  \sum_{n=0}^{\infty} a(n) q^{n} | U(d)
  = \sum_{n=0}^{\infty} a(dn) q^{n}.
\]
If $d | N$, then $U(d)$ maps $M_{k}(\Gamma_{0}(N))$ to itself and
$S_{k}(\Gamma_{0}(N))$ to itself. If $p$ is a prime with $p \nmid N$,
define the usual Hecke opeator $T(p)$ by $T(p) = U(p) + p^{k-1} V(p)$.

If $f, g \in M_{k}(\Gamma_{0}(N))$ and at least one of $f$ or $g$ is a cusp 
form, let
\[
  \langle f, g \rangle = \frac{3}{\pi [\SL_{2}(\Z) : \Gamma_{0}(N)]}
  \iint_{\mathbb{H}/\Gamma_{0}(N)} f(x+iy) \overline{g(x+iy)} y^{k} \, \frac{dx \, dy}{y^{2}}
\]
denote the usual Petersson inner product. If $p \nmid N$, then the
Hecke operators $T(p)$, acting on $S_{k}(\Gamma_{0}(N))$, are
self-adjoint with respect to the Petersson inner product. Moreover, if
$\alpha \in \GL_{2}(\Q)$ and has positive determinant, then $\langle f
| \alpha, g | \alpha \rangle = \langle f, g \rangle$.

Let $S_{k}^{{\rm new}}(\Gamma_{0}(N))$ denote the orthogonal
complement under this inner product of the space spanned by all forms
\[
  f(z) | V(d), \text{ where } f(z) \in S_{k}(\Gamma_{0}(M)),
\]
and we have $M | N$, $M < N$, and $d$ is a divisor of $N/M$. A \emph{newform}
of level $N$ is a form 
\[
  f(z) = \sum_{n=1}^{\infty} a(n) q^{n} \in S_{k}^{{\rm new}}(\Gamma_{0}(N))
\] 
that is a simultaneous eigenform of the Hecke operators $T(p)$, normalized so
that $a(1) = 1$. We have the Deligne bound
\[
  |a(n)| \leq d(n) n^{\frac{k-1}{2}}
\]
where $d(n)$ is the number of divisors of $n$ (for a detailed proof of this
inequality, see the new book by Brian Conrad \cite{Conrad}). 
A newform $f(z)$ of level $N$ is also an eigenform of the Atkin-Lehner
operator $W_{N} = \left[ \begin{matrix} 0 & -1 \\ N & 0 \end{matrix} \right]$.
This operator commutes with the Hecke operators $T(p)$ for primes $p \nmid N$.
One has more information about the coefficient $a(p)$ if $p | N$. If $N = p$, 
then $a(p) = -\lambda p^{\frac{k}{2} - 1}$, where $\lambda$ is the eigenvalue of 
$f$ under $W_{N}$. If $p^{2} | N$, then $a(p) = 0$ (see \cite{AL}, Theorem 3).

The multiplicity-one theorem states that the joint eigenspaces of all
$T(p)$ (with $p \nmid N$) in $S_{k}^{{\rm new}}(\Gamma_{0}(N))$ are 
one-dimensional. It follows from this, and the 
self-adjointness of the Hecke opeators, that if $f_{1}$ and $f_{2}$ are
two distinct newforms, then $\langle f_{1}, f_{2} \rangle = 0$. It is known
(see Section 5.11 of \cite{DiaShu}) that the Eisenstein series
$E_{k}(z)$ (and $E_{k}(z) | V(d)$) are orthogonal to cusp forms under the
Petersson inner product.

Finally, let $\eta(z)$ denote as usual the Dedekind eta function
\[
  \eta(z) = q^{1/24} \prod_{n=1}^{\infty} (1-q^{n}), \quad q = e^{2 \pi i z}.
\]
We have the following well-known identities:
\begin{align*}
  \theta(z) &= \frac{\eta^{5}(2z)}{\eta^{2}(z) \eta^{2}(4z)}\\
  \frac{\eta^{8}(4z)}{\eta^{4}(2z)} &= \sum_{n=0}^{\infty}
  \sigma(2n+1) q^{2n+1}\\
  (2z+1)^{-2} \theta^{4}\left(\frac{z}{2z+1}\right)
  &= 16 \frac{\eta^{8}(4z)}{\eta^{4}(2z)}
\end{align*}
(see the exercises on page 145 of \cite{Koblitz}, solutions are
on page 234).  

\section{Preliminary results}
\label{lemmas}

In this section we prove three lemmas that will be used in the proof
of the main results. Our first lemma proves some simple bounds on $r_{s}(n)$.

\begin{lem}
\label{simplebound}
\begin{enumerate}
\item Suppose that $n$ is a non-negative integer. There are non-negative
constants $c_{i,n}$ ($0 \leq i \leq n$) so that
\[
  r_{s}(n) = \sum_{i=0}^{n} c_{i,n} \binom{s}{i} \text{ for all } s \geq 0.
\]
\item If $n$ is fixed, $\frac{r_{2s}(n)}{n^{\frac{s-1}{2}}}$ is a decreasing
function of $s$, provided $2s \geq n + \frac{n}{\sqrt[4]{n} - 1}$.
\item If $n$ is a positive integer and $s \geq 6$, then
\[
  r_{s}(n) \leq \frac{3 (4.11)^{s}}{25 \sqrt{s!}} (n+s)^{\frac{s}{2} - 1}.
\]
\end{enumerate}
\end{lem}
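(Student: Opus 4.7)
I would give a direct combinatorial interpretation. Let $c_{i,n}$ denote the number of ordered $i$-tuples $(x_{1}, \ldots, x_{i})$ of \emph{nonzero} integers with $x_{1}^{2} + \cdots + x_{i}^{2} = n$; these are manifestly non-negative. Any representation of $n$ as a sum of $s$ integer squares is obtained by choosing the $i$ coordinates which are nonzero (in $\binom{s}{i}$ ways) and filling them with such a tuple. Since each nonzero square is at least $1$, we must have $i \leq n$, giving $r_{s}(n) = \sum_{i=0}^{n} c_{i,n} \binom{s}{i}$.

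\textbf{Part (2).} The monotonicity statement $r_{2s+2}(n)/n^{s/2} \leq r_{2s}(n)/n^{(s-1)/2}$ is equivalent to $r_{2s+2}(n) \leq \sqrt{n}\, r_{2s}(n)$. Using part (1) and the non-negativity of the $c_{i,n}$, it suffices to verify the term-by-term inequality $\binom{2s+2}{i} \leq \sqrt{n}\, \binom{2s}{i}$ for $0 \leq i \leq n$. Since
\[
  \frac{\binom{2s+2}{i}}{\binom{2s}{i}} = \frac{(2s+2)(2s+1)}{(2s+2-i)(2s+1-i)}
\]
is an increasing function of $i$, the worst case is $i=n$, reducing the problem to a single quadratic inequality in $2s$ which follows from the hypothesis $2s \geq n + n/(n^{1/4}-1)$ by elementary algebra.

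\textbf{Part (3).} I would prove this by induction on $s$, with base case $s = 6$. For the base case, I would use a classical exact formula for $r_{6}(n)$ (in terms of twisted divisor sums associated to the nontrivial character modulo $4$), bound these divisor sums by standard estimates on $\sigma_{2}(n)$, and handle small $n$ by direct enumeration to verify $r_{6}(n) \leq \frac{3 \cdot (4.11)^{6}}{25 \sqrt{6!}}(n+6)^{2}$ for all $n \geq 1$. For the inductive step, I would use the identity
\[
  r_{s+1}(n) = r_{s}(n) + 2 \sum_{m=1}^{\lfloor \sqrt{n} \rfloor} r_{s}(n - m^{2}),
\]
which follows from $\theta(z) \cdot \theta(z)^{s} = \theta(z)^{s+1}$. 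Applying the inductive hypothesis to each term and estimating the sum by the integral
\[
  \int_{0}^{\sqrt{n}} (n+s-x^{2})^{s/2-1}\, dx = (n+s)^{(s-1)/2} \int_{0}^{\arcsin \sqrt{n/(n+s)}} \cos^{s-1}\theta\, d\theta
\]
(via $x = \sqrt{n+s} \sin \theta$), then bounding $\int_{0}^{\pi/2} \cos^{s-1}\theta\, d\theta$ by standard gamma-ratio inequalities, produces the desired bound with room to spare for all $s \geq 6$.

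\textbf{Main obstacle.} The hard part is the base case $s = 6$: the constant $C_{6} \approx 21.6$ in the bound $r_{6}(n) \leq C_{6}(n+6)^{2}$ is not far from being saturated as $n \to \infty$ (e.g., along primes $p \equiv 3 \pmod{4}$ one has $r_{6}(p) \approx 20 p^{2}$), so the analysis of the character sums in the exact formula must be done carefully. By contrast, the inductive step has enough slack for $s \geq 6$ that only crude bounds on the gamma ratios are needed, since the threshold inequality $1 + \sqrt{\pi(s+1)}\, \Gamma(s/2)/\Gamma((s+1)/2) \leq 4.11$ tends to $1 + \sqrt{2\pi} \approx 3.51$ as $s \to \infty$.
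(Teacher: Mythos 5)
Your proposal is correct, and for parts (2) and (3) it follows essentially the paper's own route. For part (2) the paper likewise reduces to a term-by-term comparison using the non-negativity of the $c_{i,n}$, notes that the worst case is $i=n$, and checks that $\frac{1}{\sqrt{n}}\left(1+\frac{n}{2s-n}\right)^{2}\leq 1$ at the threshold $2s-n=\frac{n}{\sqrt[4]{n}-1}$; this is exactly your ``single quadratic inequality.'' For part (3) the paper runs the same induction from the base case $s=6$, taking $C_{6}=\frac{6449}{300}$ from the twisted divisor-sum formula (and the near-saturation you point out, e.g. $r_{6}(p)\approx 20p^{2}$ for $p\equiv 3\pmod 4$, is precisely why that constant is what it is); the only cosmetic difference is that the paper bounds $2\int_{0}^{1}(1-u^{2})^{s/2-1}\,du$ by $\sqrt{2\pi/(s-2)}$ via $\log(1-u^{2})\leq -u^{2}$ rather than by the exact Beta value $\sqrt{\pi}\,\Gamma(s/2)/\Gamma((s+1)/2)$ that your Wallis-integral substitution produces; both give the factor $4.11$ with room to spare. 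Part (1) is where you genuinely diverge: the paper proves existence and non-negativity of the $c_{i,n}$ by strong induction on $n$, combining the recursion $r_{t}(n)-r_{t-1}(n)=2\sum_{r}r_{t-1}(n-r^{2})$ with the identity $\sum_{t=1}^{s}\binom{t-1}{i}=\binom{s}{i+1}$, which yields a recursive formula for the constants but no interpretation of them. Your observation that $c_{i,n}$ counts ordered $i$-tuples of nonzero integers whose squares sum to $n$ is cleaner and makes both the non-negativity and the truncation at $i=n$ transparent; by uniqueness of coefficients in the binomial basis it identifies the same constants. What the paper's inductive proof buys is the explicit recursion it later cites as the algorithm for computing the polynomials $r_{2k}(n)$ for $2\leq n\leq 2500$, but your combinatorial description furnishes an equally effective algorithm, so nothing is lost.
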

\begin{proof}
We prove the first statement by strong induction on $n$. For $n = 0$, we have
$r_{s}(0) = 1 = 1 \cdot \binom{s}{0}$. Thus, $c_{0,0} = 1$ and the
result holds.

Assume the result is true for all $m < n$. Let $t$ be a positive integer
with $t \leq s$. Then
\begin{align*}
  r_{t}(n) - r_{t-1}(n) &= 2 \sum_{r=1}^{\lfloor \sqrt{n} \rfloor}
  r_{t-1}(n-r^{2})\\
  &= 2 \sum_{r=1}^{\lfloor \sqrt{n} \rfloor}
  \sum_{i=0}^{n-r^{2}} c_{i,n-r^{2}} \binom{t-1}{i}.
\end{align*}
Summing both sides over all $t$, $1 \leq t \leq s$ and using
that $\sum_{t=1}^{s} \binom{t-1}{i} = \binom{s}{i+1}$ gives
\begin{align*}
  r_{s}(n) &= \sum_{r=1}^{\lfloor \sqrt{n} \rfloor}
  \sum_{i=0}^{n-r^{2}} 2 c_{i,n-r^{2}} \binom{s}{i+1}\\
  &= 2 \sum_{i=1}^{n} \left(\sum_{r=1}^{\lfloor \sqrt{n-i} \rfloor}
  c_{i-1,n-r^{2}}\right) \binom{s}{i}.
\end{align*}
Since the $c_{i-1,n-r^{2}}$ are non-negative, by the induction hypothesis,
it follows that their sum is non-negative, and this proves that the result is 
true for $n$. 

To prove the second statement, it suffices to prove that each term in the
expression
\[
  \frac{r_{2s}(n)}{n^{\frac{s-1}{2}}}
  = \sum_{i=0}^{n} c_{i,n} \frac{\binom{2s}{i}}{n^{\frac{s-1}{2}}}
\]
is a decreasing function of $s$. Let $f(s) = \binom{2s}{i} \cdot n^{(1-s)/2}$.
Then,
\begin{align*}
  \frac{f(s+1)}{f(s)} &= \frac{1}{\sqrt{n}} \cdot \frac{(2s+2)(2s+1)}{(2s+2-i) (2s+1-i)}\\
  &\leq \frac{1}{\sqrt{n}} \frac{(2s+2)(2s+1)}{(2s+2-n)(2s+1-n)}\\
  &< \frac{1}{\sqrt{n}} \left(1 + \frac{n}{2s-n}\right)^{2}.
\end{align*}
This is a decreasing function of $s$, and if we take
$s = n + \frac{n}{\sqrt[4]{n} - 1}$, then $2s - n = \frac{n}{\sqrt[4]{n} - 1}$
and so
\[
  \frac{1}{\sqrt{n}} \left(1 + \frac{n}{2s-n}\right)^{2}
  = \frac{1}{\sqrt{n}} \left(1 + (\sqrt[4]{n} - 1)\right)^{2} = 1.
\]
This proves that $f(s+1) < f(s)$, as desired.

We prove the third statement by induction on $s$. Our base case is $s
= 6$ and in this case, we use the exact formula 
\[
  r_{6}(n) = \sum_{d | n} d^{2} \left(-4 \chi_{-1}(d) + 16 \chi_{-1}(n/d)\right),
\]
where
\[
  \chi_{-1}(n) = \begin{cases}
     1 & \text{ if } n \equiv 1 \pmod{4}\\
    -1 & \text{ if } n \equiv 3 \pmod{4}\\
     0 & \text{ if } n \text{ is even. }
\end{cases}
\]
We rewrite this as
\[
  r_{6}(n) = n^{2} \sum_{d | n} \frac{16 \chi_{-1}(n/d) - 4 \chi_{-1}(d)}{(n/d)^{2}}.
\]
If $n$ is even, then $r_{6}(n)/n^{2} \leq 8 \zeta(2) \leq
13.2$. On the other hand if $n$ is odd, then
$16 \chi_{-1}(n/d) - 4 \chi_{-1}(d)$ is negative if $n/d \equiv 3 \pmod{4}$
and $16 \chi_{-1}(n/d) - 4 \chi_{-1}(d) \leq 20$ if $n/d \equiv 1 \pmod{4}$.
Thus,
\[
  \frac{r_{6}(n)}{n^{2}} \leq 20 \sum_{\substack{d | n \\ d \equiv 1 \pmod{4}}}
  \frac{1}{d^{2}} \leq 20 \sum_{n=0}^{\infty} \frac{1}{(4n+1)^{2}}.
\]
One can show that the right hand side above is about $21.4966613
\leq \frac{6449}{300}$. We denote by $C_{s}$ a constant so that
$r_{s}(n) \leq C_{s} (n+s)^{\frac{s}{2} - 1}$, and we take
$C_{6} = \frac{6449}{300}$. This proves the base case.

Assume now that $s \geq 6$. We have
\begin{align*}
  r_{s+1}(n) &= r_{s}(n) + 2 \sum_{m=1}^{\lfloor \sqrt{n} \rfloor}
  r_{s}(n-m^{2})\\
  &\leq C_{s} (n+s)^{\frac{s}{2} - 1}
  + 2 C_{s} \sum_{m=1}^{\lfloor \sqrt{n} \rfloor}
  (n+s - m^{2})^{\frac{s}{2} - 1}\\
  &\leq C_{s} (n+s)^{\frac{s}{2} - 1}
  + 2 C_{s} \int_{0}^{\sqrt{n+s+1}} (n+s+1 - x^{2})^{\frac{s}{2} - 1} \, dx\\
  &\leq C_{s} (n+s)^{\frac{s}{2} - 1}
  + 2 C_{s} (n+s+1)^{\frac{s+1}{2} - 1} \int_{0}^{1} (1-u^{2})^{\frac{s}{2} - 1}  \, du.
\end{align*}
We have
\[
  (1-u^{2})^{\frac{s}{2} - 1}
  = e^{\left(\frac{s}{2} - 1\right) \log(1-u^{2})}
  \leq e^{-(s/2 - 1) u^{2}}.
\]
Thus
\[
  2 \int_{0}^{1} (1-u^{2})^{\frac{s}{2} - 1} \, du
  \leq 2 \int_{0}^{\infty} e^{-(s/2 - 1) u^{2}} \, du
  = \sqrt{\frac{\pi}{\frac{s}{2} - 1}},
\]
and
\begin{align*}
  r_{s+1}(n) &\leq C_{s}(n+s)^{\frac{s}{2} - 1}
  + C_{s} (n+s+1)^{\frac{s+1}{2} - 1}
  \left[ \sqrt{\frac{2 \pi}{s - 2}} \right]\\
  &\leq C_{s} (n+s+1)^{\frac{s+1}{2} - 1}
  \left[ \sqrt{\frac{2 \pi}{s-2}} + \frac{(n+s)^{(s/2) - 1}}{(n+s+1)^{\frac{s+1}{2} - 1}} \right].
\end{align*}
Note that the second term inside the brackets above is a decreasing function
of $n$ and is relevant only for $n \geq 1$. It follows that
\begin{align*}
  r_{s+1}(n) &\leq C_{s} (n+s+1)^{\frac{s+1}{2} - 1}
 \cdot \frac{1}{\sqrt{s+1}} \left[ \sqrt{2 \pi} \sqrt{\frac{s+1}{s-2}}
  + \left(\frac{s+1}{s+2}\right)^{s/2 - 1} \right]\\
  &\leq \frac{C_{s} \cdot 4.11}{\sqrt{s+1}} (n+s+1)^{\frac{s+1}{2} - 1}.
\end{align*}
Hence, we may take $C_{s+1} = \frac{4.11}{\sqrt{s+1}} C_{s}$
and so
\[
  C_{s} = \frac{6449}{300} \cdot \frac{4.11^{s-6}}{\sqrt{s!/6!}}
  \leq \frac{3 (4.11)^{s}}{25 \sqrt{s!}}.
\]
\end{proof}

Next, we use Deligne's bound on the Fourier coefficients of a newform
to bound its value.

\begin{lem}
\label{newformbound}
Suppose that $k \geq 7$, $y \geq \frac{1}{2 \pi}$,
and $g(z) = \sum_{n=1}^{\infty} a(n) q^{n}$ with 
$|a(n)| \leq d(n) n^{\frac{k-1}{2}}$.
Then
\[
  |g(x+iy)| \leq \frac{1}{(2 \pi y)^{\frac{k+1}{2}}}
  \Gamma\left(\frac{k+1}{2}\right) \left[ \log\left(\frac{k+1}{2}\right)
  + \gamma + 1 \right],
\]
where $\gamma$ is Euler's constant.
\end{lem}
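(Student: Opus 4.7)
The plan is to start from the elementary bound
\[
  |g(x+iy)| \leq \sum_{n \geq 1} |a(n)| e^{-2\pi n y} \leq \sum_{n \geq 1} d(n) n^{(k-1)/2} e^{-2\pi n y},
\]
coming from the triangle inequality and the Deligne bound $|a(n)| \leq d(n) n^{(k-1)/2}$, and then estimate the resulting divisor sum by Abel summation against the divisor summatory function.

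Writing $D(t) = \sum_{n \leq t} d(n)$ and $f(t) = t^{(k-1)/2} e^{-2\pi t y}$, Stieltjes integration by parts gives $\sum_n d(n) f(n) = -\int_0^\infty D(t) f'(t)\,dt$. Substituting Dirichlet's asymptotic $D(t) = t \log t + (2\gamma - 1) t + E(t)$, with the classical explicit bound $|E(t)| \leq C\sqrt t$, and integrating the smooth pieces by parts once more (using $(t \log t)' = \log t + 1$) yields the clean decomposition
\[
  \sum_n d(n) f(n) = \int_0^\infty (\log t + 2\gamma) f(t)\,dt - \int_0^\infty E(t) f'(t)\,dt.
\]
The first integral can be computed exactly using the identities $\int_0^\infty t^\alpha e^{-\mu t}\,dt = \Gamma(\alpha+1)/\mu^{\alpha+1}$ and $\int_0^\infty t^\alpha (\log t) e^{-\mu t}\,dt = (\Gamma(\alpha+1)/\mu^{\alpha+1})(\psi(\alpha+1) - \log\mu)$, giving $\frac{\Gamma((k+1)/2)}{(2\pi y)^{(k+1)/2}}(\psi((k+1)/2) + 2\gamma - \log(2\pi y))$. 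Since $\psi(x) \leq \log x$ for $x > 0$, since $\log(2\pi y) \geq 0$ under the hypothesis $y \geq 1/(2\pi)$, and since $2\gamma < \gamma + 1$, this principal contribution is bounded above by the target $\frac{\Gamma((k+1)/2)}{(2\pi y)^{(k+1)/2}}(\log((k+1)/2) + \gamma + 1)$, with absolute slack of at least $(1 - \gamma)\frac{\Gamma((k+1)/2)}{(2\pi y)^{(k+1)/2}} \approx 0.423 \cdot \frac{\Gamma((k+1)/2)}{(2\pi y)^{(k+1)/2}}$.

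The hard part is controlling the error $\int_0^\infty E(t) f'(t)\,dt$ within this narrow slack. A naive application of the triangle inequality with $|f'(t)| \leq f(t)((k-1)/(2t) + 2\pi y)$ would lose a factor of $\sqrt k$, because $\int \sqrt t\,|f'(t)|\,dt$ is substantially larger than $|\int \sqrt t\, f'(t)\,dt|$. To preserve the needed cancellation, one instead integrates by parts a second time using the Voronoi-type bound $|\int_0^t E(s)\,ds| = O(t^{3/4})$, or splits the range at the unique sign change $t^\star = (k-1)/(4\pi y)$ of $f'$ and applies the mean-value bound $|\int_a^b E f'\, dt| \leq \sup_{[a,b]}|E|\cdot |f(b)-f(a)|$ on each piece. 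Either way, after Stirling's formula the error is of order $k^{-1/2}$ relative to the principal scale $\Gamma((k+1)/2)/(2\pi y)^{(k+1)/2}$, which fits inside the slack once $k \geq 7$.
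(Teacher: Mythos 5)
Your reduction to the divisor sum and your evaluation of the smooth main term are fine: integrating the pieces $t\log t+(2\gamma-1)t$ by parts back onto $f(t)=t^{\frac{k-1}{2}}e^{-2\pi yt}$ and using $\psi(x)\le\log x$, $\log(2\pi y)\ge 0$, $2\gamma<\gamma+1$ does give the stated shape with slack roughly $\bigl(1-\gamma+\log(2\pi y)\bigr)G$, where $G=\Gamma\!\left(\frac{k+1}{2}\right)/(2\pi y)^{\frac{k+1}{2}}$. The gap is in the error term, which you correctly identify as the hard part but then misestimate. Let $t^{\star}=\frac{k-1}{4\pi y}$ be the critical point of $f$. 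Your split-at-$t^{\star}$ bound gives $C\sqrt{t^{\star}}\,f(t^{\star})$ per piece, and by Stirling $f(t^{\star})\sim\frac{2\sqrt{\pi}\,y}{\sqrt{k-1}}\,G$, so that $\sqrt{t^{\star}}\,f(t^{\star})\sim\sqrt{y}\,G$: the factor $\sqrt{t^{\star}}\asymp\sqrt{k/y}$ exactly cancels the $1/\sqrt{k}$ in $f(t^{\star})/G$ (the peak of $f$ has width $\asymp t^{\star}/\sqrt{k}$, so its height is $\sqrt{k}$ times larger than $G/t^{\star}$). Hence the error is $\asymp C\sqrt{y}\,G$, uniformly in $k$ --- not $O(k^{-1/2})G$ as you claim. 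At $y=\frac{1}{2\pi}$ the slack is only about $0.42\,G$ while the error is about $0.8\,C\,G$ with $C$ the constant in $|E(t)|\le C\sqrt{t}$ valid down to $t=O(1)$; no available explicit constant is small enough to close this, and for large $y$ the comparison of $\sqrt{y}$ against $\log(2\pi y)$ fails outright (the lemma is trivially true there, but your decomposition does not show it). Your Voronoi alternative is also misquoted: $\int_{0}^{X}E(t)\,dt=\frac{X}{4}+O(X^{3/4})$, not $O(X^{3/4})$; one can remove the mean $\frac14$ harmlessly, but then one still needs an explicit Voronoi constant and a separate treatment of large $y$, neither of which you supply. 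Finally, the mean-value inequality $\bigl|\int_{a}^{b}Ef'\bigr|\le\sup|E|\cdot|f(b)-f(a)|$ needs $f'$ of constant sign, and on $[t^{\star},\infty)$ the supremum of $|E|$ is not finite under the hypothesis $|E(t)|\le C\sqrt{t}$, so that piece needs the integration-by-parts version anyway.

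The paper sidesteps all of this by never introducing a signed error term. It uses the elementary one-sided bound $D(x)=\sum_{n\le x}\lfloor x/n\rfloor\le x\log x+\gamma x+1\le x\bigl(\log x+\gamma+1\bigr)$, writes the sum as $\int_{1}^{\infty}D(x)\bigl(-f'(x)\bigr)\,dx$, discards the portion $x<t^{\star}$ where the kernel $-f'(x)$ is negative (and the integrand therefore nonpositive), and replaces $D$ by its upper bound on the rest; the resulting integral is then evaluated with $\Gamma$ and $\psi$. No cancellation is required anywhere. If you want to salvage your route, you must either switch to such a one-sided bound on $D$, or import a genuinely small explicit constant in the Dirichlet divisor problem valid for all $t\ge 1$ together with a separate argument for large $y$ and small $k$; as written, the proposal does not prove the lemma.
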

\begin{proof}
Since the $n$th Fourier coefficient of $g(z)$ is bounded by
$d(n) n^{\frac{k-1}{2}}$, we have that
\[
  |g(x+iy)| \leq \sum_{n=1}^{\infty} d(n) n^{\frac{k-1}{2}} e^{-2 \pi n y}.
\]
If $D(x) = \sum_{n \leq x} d(n)$, then $D(x) \leq x \log(x) + \gamma x + 1
\leq x \log(x) + (\gamma + 1) x$. By partial summation, we have
\begin{align*}
  \sum_{n=1}^{\infty} d(n) n^{\frac{k-1}{2}} e^{-2 \pi n y}
  &= \int_{1}^{\infty} D(x) \left[ 2 \pi y x^{\frac{k-1}{2}} -
  \left(\frac{k-1}{2}\right) x^{\frac{k-3}{2}} \right] e^{-2 \pi x y} \, dx\\
  &\leq 2 \pi y \int_{\frac{k-1}{4 \pi y}}^{\infty} 
  (\log(x) + (\gamma + 1)) x^{\frac{k+1}{2}} e^{-2 \pi x y} \, dx.
\end{align*}
Now, we set $u = 2 \pi x y$, $du = 2 \pi y \, dx$. We get
\begin{align*}
  & 2 \pi y \int_{\frac{k-1}{2}}^{\infty}
  \left(\log\left(\frac{u}{2 \pi y}\right) + (\gamma + 1)\right)
  \left(\frac{u}{2 \pi y}\right)^{\frac{k+1}{2}} e^{-u}
  \, \frac{du}{2 \pi y}\\
  &= \frac{1}{(2 \pi y)^{\frac{k+1}{2}}}
  \int_{\frac{k-1}{2}}^{\infty} \left(\log(u) - \log(2 \pi y) + \gamma
  + 1\right) u^{\frac{k+1}{2}} e^{-u} \, du.
\end{align*}
Since $y \geq \frac{1}{2 \pi}$, $\log(2 \pi y) > 0$ and so we
neglect the term involving it. We get
\[
  \frac{1}{(2 \pi y)^{\frac{k+1}{2}}} 
  \left[ \int_{\frac{k-1}{2}}^{\infty} \log(u) u^{\frac{k+1}{2}} e^{-u} \, du
  + (\gamma + 1) \int_{\frac{k-1}{2}}^{\infty} u^{\frac{k+1}{2}} e^{-u} \, du\right].
\]
If we extend the integrals down to zero, then the negative contribution of\\
$\int_{0}^{1} \log(u) u^{\frac{k+1}{2}} e^{-u} \, du$ is cancelled by that
of $[1.5,2]$ for $k \geq 7$. Thus, we get the bound
\begin{align*}
  |g(z)| &\leq \frac{1}{(2 \pi y)^{\frac{k+1}{2}}}
  \Gamma\left(\frac{k+1}{2}\right) \left[ \psi\left(\frac{k+1}{2}\right)
  + \gamma + 1\right],
\end{align*}
where $\Gamma'(z) = \int_{0}^{\infty} \log(u) u^{z-1} e^{-u} \, du$
and $\psi(z) = \psi(z) = \frac{\Gamma'(z)}{\Gamma(z)}$. The formula
(see equation 6.3.21 on page 258 of \cite{AS})
\[
  \psi(z) = \log(z) - \frac{1}{2z} - \int_{0}^{\infty} \frac{2t \, dt}{(z^{2} + t^{2}) (e^{2 \pi t} - 1)}
\]
shows that $\psi(z) \leq \log(z)$. Thus, we obtain the bound
\[
\frac{1}{(2 \pi y)^{\frac{k+1}{2}}}
  \Gamma\left(\frac{k+1}{2}\right) \left[
  \log\left(\frac{k+1}{2}\right) + \gamma + 1 \right].
\]
\end{proof}

Finally, we will need to understand Petersson inner products of
newforms $f$ with their images under $V(d)$. This is the subject of the
next result.

\begin{lem}
\label{petold}
Suppose that $f(z) = \sum_{n=1}^{\infty} a(n) q^{n} \in S_{k}^{{\rm new}}(\Gamma_{0}(N))$ is a newform. If $p \nmid N$, then
\[
  \langle f, f|V(p) \rangle = \frac{a(p)}{p^{k-1} (p+1)} \langle f, f \rangle.
\]
\end{lem}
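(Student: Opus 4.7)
The plan is to combine the Hecke eigenvalue equation for $f$ with an adjointness identity between the operators $U(p)$ and $V(p)$ under the Petersson inner product. Since $T(p) = U(p) + p^{k-1} V(p)$ and $f | T(p) = a(p) f$, I first rewrite the eigenvalue equation as $f | U(p) = a(p) f - p^{k-1} f | V(p)$, so that pairing with $f$ under the Petersson inner product yields
\[
\langle f | U(p), f \rangle = a(p) \langle f, f \rangle - p^{k-1} \langle f | V(p), f \rangle.
\]

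Next I would establish the adjoint identity
\[
\langle g | U(p), h \rangle = p^{k} \langle g, h | V(p) \rangle,
\]
valid whenever both sides are defined. To derive this, I use the coset expression $g | U(p) = p^{k/2-1} \sum_{j=0}^{p-1} g | \sigma_j$ with $\sigma_j = \left[\begin{matrix} 1 & j \\ 0 & p \end{matrix}\right]$, then apply the invariance $\langle x | \sigma, y | \sigma \rangle = \langle x, y \rangle$ to each summand to get $\langle g | \sigma_j, h \rangle = \langle g, h | \sigma_j^{-1} \rangle$. Since $\sigma_j^{-1}$ differs by a trivially-acting scalar from the matrix $\left[\begin{matrix} p & -j \\ 0 & 1 \end{matrix}\right]$, a direct slash computation gives $h | \sigma_j^{-1} = p^{k/2} h(pz - j)$, and because $h$ is invariant under the translation $\left[\begin{matrix} 1 & -j \\ 0 & 1 \end{matrix}\right] \in \Gamma_0(Np)$, this simplifies to $p^{k/2} h | V(p)$. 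Summing the $p$ equal contributions yields $p^{k/2-1} \cdot p \cdot p^{k/2} \langle g, h | V(p) \rangle = p^k \langle g, h | V(p) \rangle$.

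Specializing the adjoint identity to $g = h = f$ gives $\langle f | U(p), f \rangle = p^k \langle f, f | V(p) \rangle$. Since $f$ has real Fourier coefficients (being a newform of trivial character) and so does $f | V(p)$, the Petersson inner product $\langle f, f | V(p) \rangle$ is real, hence equal to $\langle f | V(p), f \rangle$. Substituting into the identity from the first paragraph gives
\[
\bigl(p^{k} + p^{k-1}\bigr) \langle f, f | V(p) \rangle = a(p) \langle f, f \rangle,
\]
which rearranges to $\langle f, f | V(p) \rangle = \frac{a(p)}{p^{k-1}(p+1)} \langle f, f \rangle$. The main technical subtlety is the adjoint identity itself: the individual slashed pieces $g | \sigma_j$ are not modular on a congruence subgroup (only their sum, equal to the image under $U(p)$, is), so one has to justify the intermediate manipulations by carrying everything out at level $Np$ and relying on the level-invariance of the normalized Petersson inner product stated in Section~\ref{prelim}.
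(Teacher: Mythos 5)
Your argument is correct, and it arrives at the crucial intermediate identity $\langle f, f|U(p)\rangle = p^{k}\,\langle f, f|V(p)\rangle$ by a genuinely different route than the paper. You prove the adjointness of $U(p)$ and $p^{k}V(p)$ directly, by writing $f|U(p)$ as $p^{k/2-1}\sum_{j} f|\sigma_{j}$ over coset representatives and applying the $\GL_{2}(\Q)$-invariance $\langle x|\alpha, y|\alpha\rangle = \langle x, y\rangle$ to each piece; the paper instead invokes Rankin's asymptotic formula $\sum_{n\leq x} a(n)\overline{b(n)}n^{1-k} = \frac{(4\pi)^{k}}{(k-1)!}\langle f,g\rangle x + O(x^{3/5})$ and obtains the same relation by reindexing the sum $\sum_{n \le x} a(n)a(n/p)/n^{k-1}$ over multiples of $p$. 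From that point on the two proofs coincide: both feed $f|T(p) = f|U(p) + p^{k-1}f|V(p)$ into the inner product and use the reality of the Fourier coefficients. Your route is more self-contained and algebraic, since it needs only the invariance property already recorded in Section~\ref{prelim} rather than Rankin's analytic theorem; the paper's route is shorter on the page because it outsources the hard work to the cited Rankin--Selberg estimate. One small correction to your closing remark: each individual piece $g|\sigma_{j}$ \emph{is} modular on a congruence subgroup, namely $\sigma_{j}^{-1}\Gamma_{0}(Np)\sigma_{j}\cap\SL_{2}(\Z)$, which contains $\Gamma(Np^{2})$; so the technical subtlety you flag is even milder than you suggest --- all the inner products in your computation can be taken over a single common congruence subgroup, where the index-normalized Petersson product is level-independent, and no further justification is needed.
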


Note that the assumption that $f$ has trivial character implies that
the Fourier coefficients of $f$ are real. This fact will be used frequently
in what follows.

\begin{proof}
Rankin proved in \cite{Rankin3} that if $f = \sum a(n) q^{n}$
and $g = \sum b(n) q^{n}$ are cusp forms of weight $k$, then
\[
  \sum_{n \leq x} \frac{a(n) \overline{b(n)}}{n^{k-1}}
  = \frac{(4 \pi)^{k}}{(k-1)!}
  \langle f, g \rangle x + O(x^{3/5}).
\]
We will use this formula to prove the results above. We start by
letting $c = \frac{(4 \pi)^{k}}{(k-1)!}$,
and $p$ be a prime number with $p \nmid N$. Then,
\begin{align*}
  \langle f, f|V(p) \rangle
  &= \lim_{x \to \infty} \frac{1}{c} \cdot \frac{1}{x}
  \sum_{n \leq x} \frac{a(n) a(n/p)}{n^{k-1}}\\
  &= \lim_{x \to \infty} \frac{1}{c} \cdot \frac{1}{x}
  \sum_{pn \leq x} \frac{a(pn) a(n)}{(pn)^{k-1}}\\
  &= \lim_{x \to \infty} \frac{1}{c} \cdot
  \frac{1}{p^{k}} \cdot \frac{1}{\frac{x}{p}}
  \sum_{n \leq \frac{x}{p}} \frac{a(pn) a(n)}{n^{k-1}}\\
  &= \frac{1}{p^{k}} \langle f, f|U(p) \rangle.
\end{align*}
Now, $a(p) f = f|T(p) = f|U(p) +
p^{k-1} f|V(p)$.  It follows that
\begin{align*}
  a(p) \langle f, f \rangle
  &= \langle f, f|T(p) \rangle
  = \langle f, f|U(p) \rangle + p^{k-1} \langle f, f|V(p) \rangle\\
  &= p^{k} \langle f, f|V(p) \rangle + p^{k-1} \langle f, f|V(p) \rangle\\
  &= p^{k-1} (p+1) \langle f, f|V(p) \rangle.
\end{align*}
Thus,
\[
  \langle f, f|V(p) \rangle = \frac{a(p)}{p^{k-1} (p+1)} \langle f, f \rangle.
\]
\end{proof}

\section{Proof of Theorem~\ref{main} and Theorem~\ref{positivity}}
\label{mainsec}

In this section, we will prove the main results. We will first prove
Theorem~\ref{positivity} and then deduce Theorem~\ref{main} from it.

\begin{proof}[Proof of Theorem~\ref{positivity}]
First, for each newform $g$ of level $1$, $2$, or $4$,
we will find a form $\tilde{g}$ with the property that
the coefficient of $g$ in the representation of $\theta^{2k}$
is positive if and only if $\langle \theta^{2k}, \tilde{g} \rangle > 0$.
Each $\tilde{g}$ will be an eigenform of $T_{p}$ for all odd
primes $p$, and will also be an eigenform of $W_{4}$ with the
same eigenvalue as that of $\theta^{2k}$.

Recall the decomposition
\[
  \theta^{2k}(z) = a_{1} E_{k}(z) + a_{2} E_{k}(2z) + a_{3} E_{k}(4z)
  + \sum_{i} c_{i} g_{i}(z) + \sum_{i} d_{i} g_{i}(2z) + \sum_{i} e_{i}
  g_{i}(4z),
\]
where the $g_{i}$ are newforms of level $1$, $2$, or $4$, and 
the $c_{i}, d_{i}, e_{i} \in \R$. If $V$ is an eigenspace for all $T_{n}$ 
(with $n$ odd), then $V$ is also stable under $W_{4}$. Since 
$\theta^{2k} | W_{4} = (-1)^{\frac{k}{2}} \theta^{2k}$, it follows that the 
projection of $\theta^{2k}$ onto $V$ must also have eigenvalue 
$(-1)^{\frac{k}{2}}$ under $W_{4}$.

If $V$ is an eigenspace coming from a newform $g_{i}$ of level $4$, then
$\dim V = 1$. If $c_{i} \ne 0$, then $g_{i} | W_{4} = (-1)^{\frac{k}{2}}$.
In this case, we have
$\langle \theta^{2k}, g \rangle = \langle c_{i} g_{i}, g_{i} \rangle =
c_{i} \langle g_{i}, g_{i} \rangle$ and thus $c_{i} > 0$ if and only if
$\langle \theta^{2k}, g_{i} \rangle > 0$, and so
we set $\tilde{g}_{i} = g_{i}$. Part (i) of Theorem 7 of \cite{AL}
shows that for any newform of level $4$, $g_{i} | W_{4} = -1$,
and hence $c_{i} = 0$ if $k \equiv 0 \pmod{4}$.

If $V$ is an eigenspace coming from a newform $g_{i}$ of level $2$, then
$\dim V = 2$. This vector space decomposes into one-dimensional plus
and minus eigenspaces under the action of $W_{4}$. It follows that
the projection of $\theta^{2k}$ onto $V$ is 
$c_{i} (g_{i} + (-2)^{\frac{k}{2}} \lambda g_{i} | V(2))$,
where $\lambda$ is the Atkin-Lehner eigenvalue of $g_{i}$. Thus,
we set $\tilde{g}_{i} = g_{i} + (-2)^{\frac{k}{2}} \lambda g_{i} | V(2)$.
This form will be orthogonal to any element in the opposite $W_{4}$
eigenspace, since $W_{4}$ is an isometry with respect to the Petersson
inner product. It follows that
$c_{i} > 0$ if and only if $\langle \theta^{2k}, \tilde{g}_{i} \rangle > 0$.

If $V$ is an eigenspace coming from a newform $g_{i}$ of level $1$, then
$\dim V = 3$ and
\begin{align*}
  g_{i} | W_{4} &= 2^{k} g_{i} | V(4)\\
  g_{i} | V(2) | W_{4} &= g_{i} | V(2)\\
  g_{i} | V(4) | W_{4} &= 2^{-k} g_{i}.
\end{align*}
We have that $V = V^{+} \oplus V^{-}$, where $V^{+}$ and $V^{-}$ are the
plus and minus eigenspaces for $W_{4}$. Then $\dim V^{+} = 2$ and
it is spanned by $g_{i} + 2^{k} g_{i} | V(4)$ and $g_{i} | V(2)$.
Also $\dim V^{-} = 1$ and it is spanned by $g_{i} - 2^{k} g_{i} | V(4)$. 
If $k \equiv 0 \pmod{4}$, then the Atkin-Lehner sign is $+1$.
If $k \equiv 2 \pmod{4}$, the Atkin-Lehner sign is $-1$. 

When $k \equiv 2 \pmod{4}$, we set
$\tilde{g}_{i} = g_{i} - 2^{k} g_{i} | V(4)$. This form satisfies
$\tilde{g}_{i} | W_{4} = -\tilde{g}_{i}$, and is again orthogonal to
the form spanning the plus eigenspace for $W_{4}$.

When $k \equiv 0 \pmod{4}$, we set
$\tilde{g}_{i} = g_{i} - \frac{4}{3} a(2) g_{i} | V(2) +
2^{k} g_{i} | V(4)$. This form satisfies $\tilde{g}_{i} | W_{4} = \tilde{g}_{i}$,
and is hence orthogonal to $g_{i} - 2^{k} g_{i} | V(4)$. By
Lemma~\ref{petold} it is orthogonal to $g_{i} | V(2)$.

We have
\begin{align*}
  \langle \theta^{2k}, \tilde{g}_{i} \rangle &=
  \frac{1}{2 \pi} \iint_{\mathbb{H}/\Gamma_{0}(4)} \theta^{2k}(z)
  \overline{\tilde{g}_{i}(z)} y^{k} \, \frac{dx \, dy}{y^{2}}\\
  &= \frac{1}{2 \pi} \sum_{j=1}^{6}
  \int_{-1/2}^{1/2} \int_{\sqrt{1-x^{2}}}^{\infty}
  \left(\theta^{2k} |_{k} \gamma_{j}\right)(x+iy)
  \overline{\tilde{g}_{i} |_{k} \gamma_{j} (x+iy)} y^{k-2} \, dy \, dx.
\end{align*}
Here, the matrices
\[
  \gamma_{1} = \left[ \begin{matrix} 1 & 0\\ 0 & 1 \end{matrix} \right],
  \gamma_{2} = \left[ \begin{matrix} 0 & -1\\ 1 & 0 \end{matrix} \right],
  \gamma_{3} = \left[ \begin{matrix} 0 & -1\\ 1 & 1 \end{matrix} \right],
  \gamma_{4} = \left[ \begin{matrix} 0 & -1\\ 1 & 2 \end{matrix} \right],
  \gamma_{5} = \left[ \begin{matrix} 0 & -1\\ 1 & 3 \end{matrix} \right],
  \gamma_{6} = \left[ \begin{matrix} 1 & 0 \\ 2 & 1 \end{matrix} \right]
\]
are a set of representatives for the right cosets of $\Gamma_{0}(4)$ in
$\SL_{2}(\Z)$. 

{\bf Term 1}: This is the contribution from the cusp at infinity. In
particular, it is the $j = 1$ term in the above sum. We split
this term into two parts: $\{ x+iy : -1/2 \leq x \leq 1/2, y \geq 1 \}$,
and $\{ x+iy : -1/2 \leq x \leq 1/2, \sqrt{1-x^{2}} \leq y \leq 1 \}$.

Write
\[
  \tilde{g}_{i}(z) = \sum_{n=1}^{\infty} a(n) q^{n}.
\]
Applying the Deligne bound to each of the various possible forms
of $\tilde{g_{i}}$, we see that in all cases $|a(n)| \leq \frac{17}{3}
d(n) n^{\frac{k-1}{2}}$.

The first part is
\[
  \frac{1}{2 \pi} \int_{1}^{\infty} \int_{-1/2}^{1/2} \left(\sum_{m=0}^{\infty}
  r_{2k}(m) e^{-2 \pi m y} e^{2 \pi i m x}\right)
  \left(\sum_{n=1}^{\infty} \overline{a(n)} e^{-2 \pi n y} e^{-2 \pi i n x}
  \right) y^{k-2} \, dx \, dy.
\]
Since the Fourier series representations converge uniformly on 
compact subsets of these regions, we can invert the summations and the 
integrals and obtain
\[
  \frac{1}{2 \pi} \sum_{m=0}^{\infty} \sum_{n=1}^{\infty}
  r_{2k}(m) \overline{a(n)} \int_{-1/2}^{1/2} 
  \int_{1}^{\infty} y^{k-2} e^{-2 \pi (m+n)y} e^{2 \pi i (m-n) x} \, dy \, dx.
\]
The integral over $-1/2 \leq x \leq 1/2$ is zero unless $m = n$, in which
case it is $1$. We set $u = 4 \pi n y$, $du = 4 \pi n \, dy$ and this
gives
\[
  \frac{2}{(4 \pi)^{k}} \sum_{n=1}^{\infty}
  \frac{r_{2k}(n) \overline{a(n)}}{n^{k-1}}
  \int_{4 \pi n}^{\infty} u^{k-2} e^{-u} \, du.
\]
  
We now split this sum into several ranges. The main contribution comes
from $n = 1$. We have $a(1) = 1$ and $r_{2k}(1) = 4k$. This term is
\begin{align*}
  \frac{8k}{(4 \pi)^{k}} \int_{4 \pi}^{\infty} u^{k-2} e^{-u} \, du
  &= \frac{8k}{(4 \pi)^{k}} \left[
  \int_{0}^{\infty} u^{k-2} e^{-u} \, du - \int_{0}^{4 \pi} u^{k-2} e^{-u} \, du  \right]\\
  & \geq \frac{8k}{(4 \pi)^{k}} \left[ (k-2)! - (4 \pi)^{k-1} e^{-4 \pi} \right],
\end{align*}
for $k \geq 15$, since if $k > 4 \pi + 2$, $u^{k-2} e^{-u}$ is increasing on 
$[0,4 \pi]$.

The second range is $2 \leq n \leq 2500$. Here we explicitly compute
the polynomials $r_{2k}(n)$ (using the algorithm in the proof of part
1 of Lemma~\ref{simplebound}). Part 2 of Lemma~\ref{simplebound} shows
that $\frac{r_{2k}(n)}{n^{\frac{k-1}{2}}}$ is a decreasing function of
$k$, provided $k \geq 1456$.

The third range is $2500 \leq n \leq \frac{k}{2 \pi} \log(2k)$. In this range,
we use the bound from part 3 of Lemma~\ref{simplebound}, the Deligne bound, 
$d(n) \leq 2 \sqrt{n}$, and we obtain that
\begin{align*}
  \left| \frac{r_{2k}(n) \overline{a(n)}}{n^{k-1}} \right|
  &\leq \frac{34 (4.11)^{2k}}{25 \sqrt{(2k)!}} \cdot \sqrt{n}
  \cdot \frac{(n+2k)^{k-1}}{n^{\frac{k-1}{2}}}\\
  &\leq \frac{34}{25} \frac{(4.11)^{2k}}{\sqrt{(2k)!}} \cdot
  \sqrt{\frac{k}{2 \pi} \log(2k)} \cdot \left(\sqrt{n} + \frac{2k}{\sqrt{n}}\right)^{k-1}.
\end{align*}
The function $f(x) = \left(x + \frac{2k}{x}\right)^{k-1}$ is
decreasing for $x < \sqrt{2k}$ and increasing after that. We have that
$f(50) = f(\frac{2k}{50})$ and $\frac{2k}{50} \geq \sqrt{\frac{k}{2 \pi}
 \log(2k)}$ if $k \geq 724$. Thus, we have the bound
\[
  \frac{68}{25} \frac{(4.11)^{2k}}{(4 \pi)^{k} \sqrt{(2k)!}}
  \cdot \frac{k^{3/2}}{(2 \pi)^{3/2}} \log^{3/2}(2k) \cdot \left(
  50 + \frac{2k}{50}\right)^{k-1} \cdot (k-2)!,
\]
valid provided $k \geq 724$. For $k \leq 724$, we use the larger of
the values of $f$ at $x = 50$ and $x = \sqrt{\frac{k}{2 \pi} \log(2k)}$.

The fourth and final range is $n \geq \frac{k}{2 \pi} \log(2k)$. In this
range we use the decay of the integral $\int_{4 \pi n}^{\infty} u^{k-2}
e^{-u} \, du$. We have that $u \geq 2k \log(2k)$ and so
$u^{k-2} e^{-u} \leq e^{-u/2}$ and so the integral is bounded by 
$2 e^{-2 \pi n}$. Bounding $\overline{a(n)}$ and $r_{2k}(n)$ as before, we
have that the contribution from this range is at most
\begin{align*}
  & \frac{34}{3 (4 \pi)^{k}} \sum_{n=\frac{k}{2 \pi} \log(2k)}^{\infty}
  \frac{2 n^{\frac{k}{2}}}{n^{k-1}} \cdot \left(\frac{3}{25}
  \cdot \frac{(4.11)^{2k}}{\sqrt{(2k)!}}\right) (n+2k)^{k-1} \cdot
  2 e^{-2 \pi n}.
\end{align*}
We write $\frac{(n+2k)^{k-1}}{n^{k-1}}$ as $\left(1 + \frac{2k}{n}\right)^{k-1}$.
If $k \geq 40$, $1 + \frac{2k}{n} \leq 3.87$ and we get
\[
  \frac{136}{25 (4 \pi)^{k}} \cdot \frac{(4.11)^{2k} (3.87)^{k-1}}{\sqrt{(2k)!}}
  \sum_{n=\frac{k}{2 \pi} \log(2k)}^{\infty} n^{\frac{k}{2}} e^{-2 \pi n}.
\]
If $a_{n} = n^{\frac{k}{2}} e^{-2 \pi n}$, then we have
\[
  \frac{a_{n+1}}{a_{n}} \leq \left(1 + \frac{1}{n}\right)^{\frac{k}{2}}
  e^{-2 \pi} \leq e^{\frac{k}{2n} - 2 \pi} \leq e^{-2 \pi + \frac{\pi}{\log(2k)}}\\
  \leq e^{-5.6}.
\]  
Thus, we get the bound
\[
    \frac{136}{25 (4 \pi)^{k}} \cdot \frac{(4.11)^{2k} (3.87)^{k-1}}{\sqrt{(2k)!}} \cdot \left(\frac{k}{2 \pi} \log(2k)\right)^{\frac{k}{2}}
  (2k)^{-k} \cdot \frac{1}{1 - e^{-5.6}},
\]
valid if $k \geq 40$. 

The second part of the contribution from the cusp at infinity is
\[
  \frac{1}{2 \pi} \int_{-1/2}^{1/2} \int_{\sqrt{1-x^{2}}}^{1}
  \theta^{2k}(x+iy) \overline{g(x+iy)} y^{k-2} \, dy \, dx.
\]
In this region we use Lemma~\ref{newformbound} to bound $g(x+iy)$,
and we use that
\[
  |\theta(z)| \leq 1 + 2 \sum_{n=1}^{\infty} e^{-2 \pi n^{2} y}
  \leq 1.008667
\]
for $y \geq \sqrt{3}/2$. This gives the bound
\[
  \frac{\Gamma\left(\frac{k+1}{2}\right) \left[\log\left(\frac{k+1}{2}\right) + \gamma + 1 \right] (1.008667)^{2k}}{(2 \pi)^{\frac{k+3}{2}}}
  \int_{-1/2}^{1/2} \int_{\sqrt{1-x^{2}}}^{1} y^{\frac{k-5}{2}} \, dy \, dx.
\]
The double integral above is less than or equal to
$\int_{-1/2}^{1/2} \int_{0}^{1} y^{\frac{k-5}{2}} \, dy \, dx
= \frac{2}{k-3}$. Hence, we get the bound
\[
  \frac{34 \Gamma\left(\frac{k+1}{2}\right) \left[\log\left(\frac{k+1}{2}\right) + \gamma + 1 \right] (1.008667)^{2k}}{3 (k-3) (2 \pi)^{\frac{k+3}{2}}},
\]
valid for $k \geq 7$.
  
{\bf Term 2}: This is the contribution of the cusp at zero,
and in particular the contributions from the terms involving 
$\gamma_{2}$, $\gamma_{3}$, $\gamma_{4}$, and $\gamma_{5}$.
We have
\[
  \theta^{2k} | W_{4} = (-1)^{\frac{k}{2}} \theta^{2k} \text{ and }
  \tilde{g}_{i} | W_{4} = (-1)^{\frac{k}{2}} \tilde{g}_{i}.
\]
Translating this into Fourier expansions gives
\[
  \theta^{2k} | \left[ \begin{matrix} 0 & -1 \\ 1 & 0 \end{matrix} \right]
  = \frac{(-1)^{\frac{k}{2}}}{2^{k}} \theta^{2k}\left(\frac{z}{4}\right),
  \qquad
  \tilde{g}_{i} | \left[ \begin{matrix} 0 & -1 \\ 1 & 0 \end{matrix} \right]
  = \frac{(-1)^{\frac{k}{2}}}{2^{k}} \tilde{g_{i}}\left(\frac{z}{4}\right).
\]
Thus, the contribution from these four terms is
\[
  \frac{1}{(2 \pi) \cdot 4^{k}} \sum_{j=0}^{3} \int_{-1/2}^{1/2}
  \int_{\sqrt{1-x^{2}}}^{\infty} \theta^{2k}\left(\frac{x + j + iy}{4}\right)
  \overline{\tilde{g}_{i}\left(\frac{x + j + iy}{4}\right)} \, y^{k-2}
  \, dy \, dx.
\]
We set $u = x/4$ and $v = y/4$ in the integrand and obtain
\[
  \frac{1}{2 \pi} \sum_{j=0}^{3} \int_{-1/8}^{1/8} \int_{\frac{\sqrt{1-16u^{2}}}{4}}^{\infty} \theta^{2k}\left(u + iv + \frac{j}{4}\right)
\overline{\tilde{g}_{i}\left(u + iv + \frac{j}{4}\right)} v^{k-2} \, dv \, du.
\]
We break this into two terms. The first term consists of those pieces with
$v \leq 1$. The smallest value $v$ takes on this piece is $\sqrt{3}/8$ and
since $\sqrt{3}/8 > \frac{1}{2 \pi}$, we may use Lemma~\ref{newformbound} to
bound the contribution. This yields
\[
  |\tilde{g}_{i}(u+iv)| \leq \frac{17}{3 \cdot (2 \pi v)^{\frac{k+1}{2}}}
  \Gamma\left(\frac{k+1}{2}\right) \left[ \log\left(\frac{k+1}{2}\right)
  + \gamma + 1 \right].
\]
We also have
\[
  |\theta(u+iv)| \leq 1 + 2 \sum_{n=1}^{\infty} e^{-2 \pi n^{2} v} \leq
  1.52182
\]
for $v \geq \sqrt{3}/8$. The contribution of these terms is therefore
bounded by  
\[
  \frac{17 \cdot (1.52182)^{2k}}{3 \cdot (2 \pi)^{\frac{k+3}{2}}}
  \Gamma\left(\frac{k+1}{2}\right) \left[\log\left(\frac{k+1}{2}\right)
  + \gamma + 1 \right] \sum_{j=0}^{3} \int_{-1/8}^{1/8} 
\int_{\frac{\sqrt{1 - 16u^{2}}}{4}}^{1} v^{\frac{k-5}{2}} \, dv
  \, du
\]
The sum of double integrals is bounded by $\int_{-1/2}^{1/2} \int_{0}^{1}
v^{\frac{k-5}{2}} \, dv = \frac{2}{k-3}$ and we get the bound
\[  
  \frac{34 \cdot (1.52182)^{2k}}{3 \cdot (2 \pi)^{\frac{k+3}{2}} \cdot (k-3)}
  \Gamma\left(\frac{k+1}{2}\right) \left[\log\left(\frac{k+1}{2}\right)
  + \gamma + 1 \right],
\]
on the part where $v \leq 1$, valid for $k \geq 7$.

The second term consists of those pieces with $v \geq 1$. This gives
\[
  \frac{1}{2 \pi} \int_{-1/2}^{1/2} \int_{1}^{\infty}
  \theta^{2k}(u + iv) \overline{\tilde{g}_{i}(u + iv)} v^{k-2} \, dv \, du.
\]
This is exactly the same as the contribution of the first
part of the cusp at infinity! 

{\bf Term 3}: This is the contribution of the cusp at $1/2$ corresponding
to the matrix $\gamma_{6}$. We must understand the Fourier expansion
of $\tilde{g}_{i} | \gamma_{6}$. Since $\gamma_{6} \in \Gamma_{0}(2)$,
terms of level 1 or level 2 are not affected.

If $g$ is a newform of level $4$, then since $\gamma_{6}$ is not in 
$\Gamma_{0}(4)$, we have that $g \mapsto g + g | \gamma_{6}$ is the trace map
from $S_{k}(\Gamma_{0}(4))$ to $S_{k}(\Gamma_{0}(2))$. Since newforms are in the
kernel of the trace map (by Theorem 4 of \cite{Li}), it follows that $g + g | \gamma_{6} = 0$
and so $g | \gamma_{6} = -g$.

If $g$ is a newform of level $2$, we have
\[
  g | V(2) | \gamma_{6} = 2^{-k/2} g | \left[ \begin{matrix}
  2 & 0 \\ 0 & 1 \end{matrix} \right] \left[ \begin{matrix}
  1 & 0 \\ 2 & 1 \end{matrix} \right]\\
  = 2^{-k/2} g | \left[ \begin{matrix}
  0 & -1 \\ 2 & 0 \end{matrix} \right] \left[ \begin{matrix}
  1 & 0 \\ -2 & 1 \end{matrix} \right] \left[ \begin{matrix}
  1 & 1/2 \\ 0 & 1 \end{matrix} \right].
\]
The first matrix is the Atkin-Lehner involution of level 2, of which $g$ is
an eigenform. The second matrix is in $\Gamma_{0}(2)$ and the third matrix
does not affect the size of the Fourier coefficients at infinity.
It follows that the $n$th Fourier coefficient of
$g | V(2) | \gamma_{6}$ is bounded by $2^{-k/2} d(n) n^{\frac{k-1}{2}}$.

If $g$ is a newform of level $1$, we have
\[
  g | V(4) | \gamma_{6} = 2^{-k} g \left[ \begin{matrix}
  4 & 0 \\ 0 & 1 \end{matrix} \right] \left[ \begin{matrix}
  1 & 0 \\ 2 & 1 \end{matrix} \right]
  = 2^{-k} g | \left[ \begin{matrix} 2 & 1 \\ 1 & 1 \end{matrix} \right]
  \left[ \begin{matrix} 2 & -1 \\ 0 & 2 \end{matrix} \right]
  = 2^{-k} g(z - 1/2).
\]
Thus, the $n$th Fourier coefficient of $g | V(4) | \gamma_{6}$ is bounded
by $2^{-k} d(n) n^{\frac{k-1}{2}}$. It follows that for any $\tilde{g}_{i}$,
the $n$th coefficient of $\tilde{g}_{i} | \gamma_{6}$ is bounded by
$\frac{14}{3} d(n) n^{\frac{k-1}{2}}$.

Now, $\theta^{2k} | \gamma_{6} = 2^{2k}
\frac{\eta(4z)^{4k}}{\eta(2z)^{4k}}$.  The form $F(z) =
\frac{\eta(4z)^{8}}{\eta(2z)^{4}} \in M_{2}(\Gamma_{0}(4))$ and
satisfies
\[
  F(z) = \sum_{n \text{ odd }} \sigma(n) q^{n}.
\]
Thus, for $y \geq \sqrt{3}/2$, $|F(z)| \leq e^{-2 \pi y}
\left(\sum_{n \text{ odd }} \sigma(n) e^{-2 \pi (n-1) y}\right)
\leq 1.0001 e^{-2 \pi y}$ and so
\[
  |\theta^{2k} | \gamma_{6}| \leq 2^{2k} (1.0001)^{k/2} e^{-k \pi y}
\]
for $y \geq \sqrt{3}/2$. The contribution of the cusp at $1/2$ is therefore
\[
  \frac{1}{2 \pi} \int_{-1/2}^{1/2} \int_{\sqrt{1-x^{2}}}^{\infty}
  \theta^{2k} | \gamma_{6}(x+iy) \overline{\tilde{g}_{i} | \gamma_{6}(x+iy)}
  y^{k-2} \, dy \, dx.
\]
By Lemma~\ref{newformbound}, we have 
\[
  |\tilde{g}_{i} | \gamma_{6}(x+iy)|
\leq \frac{14}{3} \frac{1}{(2 \pi)^{\frac{k+1}{2}}}
\Gamma\left(\frac{k+1}{2}\right) \left[ \log\left(\frac{k+1}{2}\right)
+ \gamma + 1 \right] \cdot \frac{1}{y^{\frac{k+1}{2}}}. 
\]
This gives the bound
\[
  \frac{14 \cdot 2^{2k} \cdot (1.0001)^{k/2}}{3 (2 \pi)^{\frac{k+3}{2}}} 
  \Gamma\left(\frac{k+1}{2}\right) \left[ \log\left(\frac{k+1}{2}\right)
  + \gamma + 1 \right] \int_{0}^{\infty}
  y^{\frac{k-5}{2}} e^{-k \pi y} \, dy.
\]
The integral above is $\frac{1}{(k \pi)^{\frac{k-3}{2}}} \Gamma\left(\frac{k-3}{2}\right)$, and so the bound on this term is
\[
  \frac{14 \cdot 2^{2k} \cdot (1.0001)^{k/2}}{3 (2 \pi)^{\frac{k+3}{2}}
  (k \pi)^{\frac{k-3}{2}}}
  \Gamma\left(\frac{k+1}{2}\right) \Gamma\left(\frac{k-3}{2}\right) 
  \left[ \log\left(\frac{k+1}{2}\right)
  + \gamma + 1 \right]
\]
and is valid for $k \geq 7$.

After dividing each term above by $\frac{(k-2)!}{(4 \pi)^{k}}$,
the main term is increasing linearly (it is about $16k$), and
each other term decreases exponentially. The most troublesome term
is the term from the third range of values of $n$ from the cusp at infinity,
and (after dividing by $\frac{(k-2)!}{(4 \pi)^{k}}$) is asymptotic to
$c_{1} c_{2}^{k} k^{1/4} \ln(k)^{3/2}$, where $c_{2} \approx 0.918$, but 
$c_{1} \approx 1.69 \cdot 10^{543}$. This term is larger than the main term only
when $k \geq 14000$.

For this reason, we must explicitly calculate our bounds for $k <
14000$. In this range, we refine our estimate of the troublesome term
by using the exact values of the incomplete $\Gamma$-function $\int_{4
  \pi n}^{\infty} u^{k-2} e^{-u} \, du$. Also, for $k \leq 2550$, we
compute the values of $r_{2k}(n)$ explicitly for $2 \leq n \leq 2500$
and use these in our bounds. For $k \geq 2552$, we use part 2 of
Lemma~\ref{simplebound}.

Finally, for $k \leq 194$, our numerical bounds are not sufficient
and we use Magma to explicitly compute the decomposition
of $\theta^{2k}$ as in equation \eqref{decomp} and find that the constants
$c_{i}$ are non-negative. 
\end{proof}

\begin{proof}[Proof of Theorem~\ref{main}]
First, assume that $n$ is odd. Considering the coefficient of $q$
on both sides of \eqref{decomp}, we obtain
\[
  r_{2k}(1) = 4k = \frac{2k (-1)^{k/2}}{(2^{k} - 1) B_{k}} + \sum_{i} c_{i}.
\]
By Theorem~\ref{positivity}, we have
\[
  \sum_{i} |c_{i}| = \sum_{i} c_{i} = 4k - \frac{2k (-1)^{k/2}}{(2^{k} - 1) B_{k}}.
\]
Deligne's bound on the $n$th coefficient of $g_{i}(z)$ is bounded by 
$d(n) n^{\frac{k-1}{2}}$. Plugging this bound into the decomposition and
using the fact that the coefficients of $q^{n}$ in $g_{i}(2z)$ and
$g_{i}(4z)$ are zero if $n$ is odd gives the desired bound on
the cusp form contribution to $\theta^{2k}(z)$.

Now, suppose that $k/2$ is odd and $n$ is even. Then $k \equiv 2 \pmod{4}$.
We represent the decomposition of the cusp form part of $\theta^{2k}(z)$ as 
\[
  C(z) = \sum_{i} r_{i} \left(f_{i}(z) - 2^{k} f_{i}(z) | V(4)\right)
  + \sum_{i} s_{i} \left(g_{i}(z) - 2^{\frac{k}{2}} \lambda_{i} g_{i}(z)
  | V(2)\right)
  + \sum_{i} t_{i} h_{i}(z).
\]
Here, the $f_{i}(z)$, $g_{i}(z)$ and $h_{i}(z)$ are the newforms of levels
$1$, $2$, and $4$ respectively, and $\lambda_{i}$ is the Atkin-Lehner
eigenvalue of $g_{i}(z)$. One can see that the $n$th
coefficients of $f_{i}(z) - 2^{k} f_{i}(z) | V(4)$ and
$g_{i}(z) - 2^{\frac{k}{2}} \lambda_{i} g_{i}(z) | V(2)$ are
bounded by $3 d(n) n^{\frac{k-1}{2}}$. Thus, for even $n$, we obtain the
bound
\[
  \left(\sum_{i} 3 r_{i} + 3 s_{i}\right) d(n) n^{\frac{k-1}{2}}.
\]
We will show that $\sum_{i} 3 r_{i} + 3 s_{i} < 4k - \frac{2k}{(2^{k} - 1) B_{k}}$.

To compute the constant $\sum_{i} 3 r_{i} + 3 s_{i}$, we will compute the trace 
of $C(z)$ to $S_{k}(\Gamma_{0}(2))$, given by ${\rm Tr}(C) := 
C(z) + C(z) | \left[ \begin{matrix} 1 & 0 \\ 2 & 1 \end{matrix} \right]$. 
Straight-forward, but somewhat lengthy computations show that
\begin{align*}
  {\rm Tr}(f_{i}(z) - 2^{k} f_{i}(z) | V(4)) &= 3 f_{i}(z) - 2 a_{i}(2)
  f_{i}(z) | V(2)\\
  {\rm Tr}(g_{i}(z) - 2^{\frac{k}{2}} \lambda_{i} g_{i}(z) | V(2)
  &= 3 g_{i}(z)\\
  {\rm Tr}(h_{i}(z)) &= 0.
\end{align*}
It follows from these formulas that $\sum_{i} 3 r_{i} + 3 s_{i}$ is the
coefficient of $q$ in ${\rm Tr}(C)$. We have that
\begin{align*}
  C &= \theta^{2k} + \frac{1}{2^{k} - 1} E_{k}(z)
  - \frac{2^{k}}{2^{k} - 1} E_{k}(4z)\\
  {\rm Tr}(C) &= {\rm Tr}(\theta^{2k})
  - \frac{(-1)^{k/2}}{2^{k} - 1} {\rm Tr}(E_{k}(z))
  - \frac{2^{k}}{2^{k} - 1} {\rm Tr}(E_{k}(4z))\\
  &= \left(\theta^{2k} + 4^{k} \frac{\eta^{4k}(4z)}{\eta^{2k}(2z)}\right)
  + \frac{2}{2^{k} - 1} E_{k}(z)
  - \frac{2^{k}}{2^{k} - 1} \left((1 + 2^{1-k}) E_{k}(z) | V(2) - 2^{-k} E_{k}(z)\right).
\end{align*}
Taking the coefficient of $q$ on both sides of the preceding equation gives
\[
  \sum_{i} 3 r_{i} + 3 s_{i} = 4k - \frac{6k}{(2^{k} - 1) B_{k}}
  < 4k - \frac{2k}{(2^{k} - 1) B_{k}}
\] 
since $k \equiv 2 \pmod{4}$ and hence $B_{k} > 0$. This proves 
Theorem~\ref{main} in the case that $k \equiv 2 \pmod{4}$ and $n$ is even.
\end{proof}

\section{Final remarks}
\label{finalsec}

It is natural to consider if Theorem~\ref{main} is true in other cases.
When $k \equiv 0 \pmod{4}$ and $n$ is even, the main issue is that
if $g_{i}$ is a level 1 eigenform and
\[
  \tilde{g}_{i} = g_{i} - \frac{4}{3} a(2) g_{i} | V(2) + 2^{k} g_{i} | V(4)
  = \sum_{n=1}^{\infty} c(n) q^{n}
\]
then the best possible bound on the Fourier coefficients of
$\tilde{g}_{i}$ is $|c(n)| \leq 2 d(n) n^{\frac{k-1}{2}}$. In order
for this bound to come close to being achieved, it is necessary for
$|a(2)|$, the absolute value of the second coefficient in $g_{i}$, to
be close to $2 \cdot 2^{\frac{k-1}{2}}$. Serre proved in 1997 (see
\cite{Serre}) that if $p$ is a fixed prime, the $p$th coefficients of
newforms become equidistributed (along any sequence of weights and
levels whose sum tends to infinity, where the levels are not multiples
of $p$). It follows from this that there will be level 1 eigenforms
with $|a(2)|$ arbitrarily close to $2 \cdot 2^{\frac{k-1}{2}}$, but
also that there will be few such forms.  One approach to extending
Theorem~\ref{main} to the case when $k \equiv 0 \pmod{4}$ is to use
the equidistribution of the numbers $|a(2)|$.

It is also natural to consider the problem of deriving a sharp bound
in the case that $k$ is odd. In the case when $k$ is even, the
contribution from the cusp at zero is (up to a fairly small error) the
same as the contribution at the cusp at infinity, since both
$\theta^{2k}$ and the newforms are eigenforms of the Atkin-Lehner
involution $W_{4}$. However, when $k$ is odd, the newforms are not
eigenforms of $W_{4}$ any longer. This means that the contribution of
the cusp at zero is (up to some small error) the contribution of the
cusp at infinity times some complex number $\lambda$ of absolute value
$1$. This complex number is related to the coefficient of $q^{4}$ of
the relevant eigenform $g_{i}$. A similar result could be proven
provided one could rule out the possibility that $\lambda$ is close to
$-1$. In fact, the analogue of Theorem~\ref{positivity} is false for $k = 17$, 
although this seems to be a consequence of the smallness of the weight, rather 
than a value of $\lambda$ too close to $-1$.

For half-integral values of $k$ (corresponding to representations of $n$
as the sum of an odd number of squares), the question is still interesting.
In this case, the coefficients of the cusp forms involve square roots of
central critical $L$-values of quadratic twists of forms of level $1$ and level
$2$. The analogue of Deligne's theorem in this case would be optimal 
subconvexity bounds on these $L$-values, currently attainable only under the
assumption of the generalized Riemann hypothesis.

\bibliographystyle{amsplain}
\bibliography{refs}

\end{document}